\newtheorem{theorem}{Theorem}[section]
\newtheorem{lemma}[theorem]{Lemma}
\theoremstyle{definition}
\newtheorem{definition}[theorem]{Definition}
\newtheorem{prop}{Proposition}[section]
\theoremstyle{remark}
\numberwithin{equation}{section}
\newcommand{\neweq}[1]{\begin{equation}\label{#1}}
\def\phi{\varphi}
\def\incep{\left\{\begin{array}{cl} }
 \def\termin{\end{array}\right. }
\def\2af{2^*_\alpha}
\begin{document}

\title [A variational principle for entropy of a random dynamical system]{\textbf{A variational principle for entropy of a random dynamical system}}

\author{Yuan Lian}
\address{College \  of \ Mathematics \  and \ Statistics\ , Taiyuan Normal University, Taiyuan, 030619, China}
\email{andrea@tynu.edu.cn}



\keywords{Topological entropy, separated set, random dynamical system}


\date{}
\subjclass[2010]{37A35,37B40.}

\begin{abstract}
In this article, I give a definition of topological entropy for random dynamical systems associated to an infinite countable discrete amenable group action. I obtain a variational principle between the topological entropy and measurable fiber entropy of a random dynamical system.
\end{abstract}

\maketitle

\section{introduction}
Entropy is a widely used concept of chaoticity that is related to dynamical systems. In the concepts of entropy, there are two classical concepts: one is the pure topological concept called topological entropy, and the other is the measure-theoretical concept called measure-theoretical entropy. Firstly, the topological entropy of the dynamical system is defined using all finite open covers of $X$, and then it is calculated as the supremum on all these finite open covers.In a similar way, the measure theory entropy is defined using covers, with respect to the $T$ invariant Borel probability measure. For any finite measurable partition, it is then calculated as the supremum on all measurable partitions. Due to the introduction of the measure theory entropy of invariant Borel probability measure \cite{K1} and topological entropy \cite{AKM}, the relationship between these two types of entropy has become the concern of many scholars, so the relation between topological entropy and measure-theoretic entropy has been established, that is, the classical variational principle has emerged \cite{G,G2,P}. The topological variational principle proves that the supremum of the measure-theoretical entropy of all $T$-invariant Borel probability measures is the topological entropy for any topological dynamical system.

One of the main problems in the earliest process of proving the variational principle of topological entropy was the direct comparison between the topological entropy of open cover and the measure-theoretic entropy of measurable partitions. However, such comparisons can be avoided in some proof. For example, by considering alternative definitions of topological entropy that do not involve open cover (see \cite{DGS}). In addition, motivated by Huang and Lu's work, in this paper, I introduction a definition of topological entropy of a random dynamical system from a viewpoint of separated set and I prove a variational principle between the measure fiber entropy and the topological entropy that I presented in this paper of a random dynamical systems associated to an infinite countable discrete amenable group action (see \cite{HLZ,W}).

This paper is organized as follows. In Sec. 2, I recall the notions of a random dynamical system and its fiber entropy, introduce a topological entropy of separated set of the random dynamical system. In Sec. 3, I give a variational principle between the topological entropy and measurable fiber entropy.

\section{Preliminaries}
Let $G$ be an infinite countable discrete amenable group action, the knowledge of the measure fiber entropy of a CRDS associated to $G$, I cite the theory in literature \cite{DZ} and let's review the concepts that will be used later in this article.

I set $(\Omega,\mathcal{F},\mathbb{P})$ is complete and countably separated space. $(\Omega,\mathcal{F},\mathbb{P},G)$ denote an MDS(measurable dynamical G-system). Let $(X,\mathcal{B})$ be a measurable space, $\mathcal{E}\in\mathcal{F}\times\mathcal{B}$ and $(\mathcal{E},(\mathcal{F}\times\mathcal{B})_{\mathcal{E}})$ is a measurable space. I write $\mathcal{E}_{\omega}=\{x\in X:(\omega,x)\in \mathcal{E}\}$ for
each $\omega\in\Omega$. A continuous bundle random dynamical system(CRDS) associated to $(\Omega,\mathcal{F},\mathbb{P},G)$ is a family
$$\mathbf{F}=\{F_{g,\omega}:\mathcal{E}_{\omega}\rightarrow\mathcal{E}_{g\omega}\mid g\in G, \omega\in \Omega\}$$ satisfying:

\begin{enumerate}
\item let $e_{G}$ is the identity element of $G$, $F_{e_{G},\omega}$ is the identity over $\mathcal{E}_{\omega}$, for all $\omega\in\Omega$,

\item the map $(\mathcal{E},(\mathcal{F}\times\mathcal{B})_{\mathcal{E}})\rightarrow(X,\mathcal{B})$, given by $(\omega,x)\mapsto F_{g,\omega}(x)$,
      is measurable, for each $g\in G$,
\item let $\omega\in\Omega$ and $g_{1},g_{2}\in G,F_{g_{2},g_{1}\omega}\circ F_{g_{1},\omega}=F_{g_{2}g_{1},\omega}$,
\item if for $\mathbb{P}-$a.e. $\omega\in\Omega$, $\emptyset\neq\mathcal{E}_{\omega}\subseteq X$ is a compact subset and $F_{g,\omega}$ is a continuous map for
      any $g\in G$.
\end{enumerate}

In this case, the corresponding skew product transformation is $\Theta_{g}:(\omega,x)\mapsto(g\omega,F_{g,\omega}x)$ for each $g\in G$.

If $\Omega$ is a singleton, $(\Omega,\mathcal{F},\mathbb{P},G)$ is a trivial MDS. Let $X$ is a compact metric space, if there is an non-empty compact subset $K\subseteq X$ such that $(K,G)$ is a topological dynamical $G-$systems (TDS), that is, a CRDS associated to a trivial MDS.

I denote the space of all probability measures on $\Omega\times X$ having marginal $\mathbb{P}$ on $\Omega$ by $\mathcal{P}_{\mathbb{P}}(\Omega\times X)$. Put
$$
\mathcal{P}_{\mathbb{P}}(\mathcal{E})=\{\mu\in\mathcal{P}_{\mathbb{P}}(\Omega\times X):\mu(\mathcal{E})=1\}
$$
Denote by $\mathcal{P}_{\mathbb{P}}(\mathcal{E},G)$ the set of all $G-$invariant elements from $\mathcal{P}_{\mathbb{P}}(\mathcal{E})$. I use $\mathcal{E}_{\mathbb{P}}(\mathcal{E},G)$ to denote the set of the ergodic elements of $\mathcal{P}_{\mathbb{P}}(\mathcal{E})$.

Write $C_{\mathcal{E}}$ the set of all finite covers  of $\mathcal{E}$ and $P(\mathcal{E})$ finite partitions of $\mathcal{E}$. Let $\mathcal{U}\in C_{\mathcal{E}}$, for $\mu\in \mathcal{P}_{\mathbb{P}}(\mathcal{E},G)$, by \cite{DZ}, the $\mu-$fiber entropy of $\mathbf{F}$ with respect to $\mathcal{U}$ is
$$h^{(r)}_{\mu}(\mathbf{F},\mathcal{U})=\lim_{n\rightarrow\infty}\frac{1}{|F_{n}|}H_{\mu}(\mathcal{U}_{F_{n}}|\mathcal{F}_{\mathcal{E}})$$
and the $\mu-$fiber entropy is $$h^{(r)}_{\mu}(\mathbf{F})=\sup_{\alpha\in P(\mathcal{E})}h^{(r)}_{\mu}(\mathbf{F},\alpha).$$

\begin{definition}
For each nonempty finite subset $F$ of $G$. Define $d^{\omega}_{F}$ on $\mathcal{E}_{\omega}$ by the following formula
$$
d^{\omega}_{F}(x,y)=\max_{s\in F}d(F_{s,\omega}x,F_{s,\omega}y)\quad\text{for}\quad x,y\in \mathcal{E}_{\omega}.
$$
\end{definition}

\begin{definition}
Let $F$ be a nonempty finite subset of $G$. A set $E\subseteq \mathcal{E}_{\omega}$ is said to be $(\omega,F,\varepsilon,\mathbf{F})-$separated if $x,y\in E, x\neq y$ implies
$d^{\omega}_{F}(x,y)>\varepsilon$.
\end{definition}

\begin{definition}
Let $F$ be a nonempty finite subset of $G$ and $\varepsilon>0$, I write $Sep(\omega,F,\varepsilon,\mathbf{F})$ for the maximum cardinality of a $(\omega,F,\varepsilon,\mathbf{F})-$separated subset of $\mathcal{E}_{\omega}$.
\end{definition}

Let's prove the following lemma before proving the main result.

\begin{lemma}
Let $\mathbf{F}$ be a RDS associated to $G-$action. For any $F\in \mathcal{F}(G)$ and a positive real number $\varepsilon$, the function $Sep(\omega ,F,\varepsilon,\mathbf{F})$ is measurable in $\omega$, and for each $\delta>0$ there exists a family of maximal $(\omega,F,\varepsilon,\mathbf{F})$ separated sets $L_{\omega}\subseteq \mathcal{E}_{\omega}$, satisfying
\begin{equation}
\sharp(L_{\omega})\geq(1-\delta)Sep(\omega ,F,\varepsilon,\mathbf{F})
\end{equation}
where $\sharp(L_{\omega})$ denote the cardinality of $G_{\omega}$ and measurable in $\omega$ (in the sense that $G=\{(\omega,x):x\in \mathcal{E}_{\omega}\}\in \mathcal{F}\times \mathfrak{B}$). Moreover, the maximum in the definition of $Sep(\omega ,F,\varepsilon,\mathbf{F})$ can be taken only over measurable in $\omega$ families of $(\omega,F,\varepsilon,\mathbf{F})$ separated sets.
\end{lemma}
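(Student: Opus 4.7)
The lemma really asks for three things: measurability of the integer-valued function $Sep(\cdot,F,\varepsilon,\mathbf{F})$, existence of a family $\{L_\omega\}$ of separated sets of near-maximal cardinality whose graph in $\Omega\times X$ is jointly measurable, and the statement that the maximum in the definition of $Sep$ can be realized by such a measurable family. I plan to deduce all three from a projection argument plus one application of a measurable selection theorem.

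First I would observe that for every integer $n\geq 1$ the set
$$A_n=\bigl\{(\omega,x_1,\dots,x_n)\in\Omega\times X^n:\ (\omega,x_i)\in\mathcal{E}\ \forall i,\ d^\omega_F(x_i,x_j)>\varepsilon\ \forall i\neq j\bigr\}$$
lies in $\mathcal{F}\times\mathcal{B}^{\otimes n}$, because $\mathcal{E}\in\mathcal{F}\times\mathcal{B}$ and each $(\omega,x)\mapsto F_{s,\omega}x$ is jointly measurable (condition (2) in the definition of a CRDS). The projection of $A_n$ on $\Omega$ equals $\{\omega:Sep(\omega,F,\varepsilon,\mathbf{F})\geq n\}$; since $(\Omega,\mathcal{F},\mathbb{P})$ is complete and countably separated, this projection is analytic and hence belongs to $\mathcal{F}$. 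That gives measurability of $Sep(\cdot,F,\varepsilon,\mathbf{F})$; the values are finite because each fiber $\mathcal{E}_\omega$ is compact and the $F_{s,\omega}$ are continuous, so an $(\omega,F,\varepsilon,\mathbf{F})$-separated set cannot be infinite.

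Next I would partition $\Omega$ into the measurable sets $\Omega_n=\{\omega:Sep(\omega,F,\varepsilon,\mathbf{F})=n\}$ and, on each $\Omega_n$, consider the multifunction $\omega\mapsto\Gamma_n(\omega)$ whose values are the $n$-tuples in $\mathcal{E}_\omega^n$ that are pairwise $(\varepsilon,d^\omega_F)$-separated. Its graph $A_n\cap(\Omega_n\times X^n)$ is measurable and its values are nonempty on $\Omega_n$ by the very definition of that set. The Jankov--von Neumann selection theorem (valid on our complete probability space) produces measurable maps $\omega\mapsto x_i^{(n)}(\omega)\in X$, $1\leq i\leq n$, with $(x_1^{(n)}(\omega),\dots,x_n^{(n)}(\omega))\in\Gamma_n(\omega)$ for every $\omega\in\Omega_n$. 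Setting $L_\omega=\{x_1^{(n)}(\omega),\dots,x_n^{(n)}(\omega)\}$ on $\Omega_n$ and gluing across $n$, the set $\{(\omega,x):x\in L_\omega\}$ becomes a countable union of graphs of measurable maps restricted to measurable sets, hence belongs to $\mathcal{F}\times\mathcal{B}$, and $\sharp(L_\omega)=Sep(\omega,F,\varepsilon,\mathbf{F})\geq(1-\delta)\,Sep(\omega,F,\varepsilon,\mathbf{F})$ holds automatically. The last assertion of the lemma follows, since the family just constructed attains the maximum.

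The main obstacle is the selection step. Jankov--von Neumann requires the graph to be analytic and the $\sigma$-algebra to absorb projections; both are guaranteed here by the standing hypotheses that $(\Omega,\mathcal{F},\mathbb{P})$ is complete and countably separated and that $X$ is a standard measurable space. If one preferred to avoid selection theory, an alternative is to fix a countable dense set $\{z_k\}\subseteq X$ (so that $\{z_k\}\cap\mathcal{E}_\omega$ is dense in $\mathcal{E}_\omega$ for a.e.\ $\omega$) and to build $L_\omega$ greedily in lexicographic order of the indices $k$; this is the genuine reason the statement leaves a $(1-\delta)$ slack in the cardinality estimate, as such a greedy construction may select fewer than $Sep(\omega,F,\varepsilon,\mathbf{F})$ points, but can be forced to select at least $(1-\delta)Sep(\omega,F,\varepsilon,\mathbf{F})$ of them by stopping at a sufficiently fine stage.
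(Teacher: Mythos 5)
Your proposal is correct in substance and reaches the same three conclusions, but the selection step is genuinely different from the paper's. You apply a von Neumann--Aumann type selection theorem directly to the multifunction $\omega\mapsto\Gamma_n(\omega)$ of $\varepsilon$-separated $n$-tuples, whose graph is measurable but whose values are \emph{not closed} (the separation condition $d^\omega_F(x_i,x_j)>\varepsilon$ is open); this works because the standing completeness hypothesis on $(\Omega,\mathcal{F},\mathbb{P})$ lets you select from any measurable graph with nonempty sections, and it yields $\sharp(L_\omega)=Sep(\omega,F,\varepsilon,\mathbf{F})$ exactly, making the $(1-\delta)$ slack vacuous. The paper instead insists on using the Castaing--Valadier selection theorem for \emph{compact-valued} measurable multifunctions (Theorem III.30 of \cite{CV}): it replaces the open condition by the closed conditions $d^\omega_F(x_m,x_n)\geq(1+i^{-1})\varepsilon$, obtains compact sets $M^{F,i}_l(\omega)\uparrow M^F_l(\omega)$, introduces the auxiliary counting functions $t_{F,i}(\omega,\varepsilon)$, and then decomposes $\Omega$ into the pieces $\Omega_{l,i}$ on which a fixed approximation level suffices before selecting; the $(1-\delta)$ and the sets $N^F_{l,\delta}$ are the bookkeeping for this approximation. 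Your route is shorter and buys the sharper cardinality statement; the paper's route buys independence from completion-based selection by staying within compact-valued multifunctions. Two small cautions: the theorem you want is usually cited as the Aumann (or von Neumann) measurable selection theorem for measurable graphs over a complete measure space rather than Jankov--von Neumann (which is phrased for analytic sets in products of Polish spaces), and the same completeness is what you need for the projection $\{\omega:Sep(\omega,F,\varepsilon,\mathbf{F})\geq n\}\in\mathcal{F}$ (this is exactly Lemma 4.1 of \cite{DZ}, which the paper invokes). Your closing ``greedy'' alternative is only a sketch --- the claim that stopping ``at a sufficiently fine stage'' forces at least $(1-\delta)Sep$ selected points is not justified and should be dropped or worked out --- but since it is offered only as an aside it does not affect the main argument.
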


\begin{proof}
Let $F$ be a nonempty finite subset of $G$. $l\in N_{+}$, I will make the following five collection symbols for the sake of convenience.
$$H_{l}=\{(\omega,x_{1},x_{2},\ldots,x_{l}):\omega\in\Omega,x_{m}\in \mathcal{E}_{\omega},\forall m\},$$
$$M^{F}_{l}=\{(\omega,x_{1},x_{2},\ldots,x_{l})\in H_{l}:d^{\omega}_{F}(x_{m},x_{n})>\varepsilon,\forall m\neq n\},$$
$$M^{F,i}_{l}=\{(\omega,x_{1},x_{2},\ldots,x_{l})\in H_{l}:d^{\omega}_{F}(x_{m},x_{n})\geq(1+i^{-1})\varepsilon,\forall m\neq n\},$$
$$M^{F}_{l}(\omega)=\{(x_{1},x_{2},\ldots,x_{l}):(\omega,x_{1},x_{2},\ldots,x_{l})\in M^{F}_{l}\},$$
$$M^{F,i}_{l}(\omega)=\{(x_{1},x_{2},\ldots,x_{l}):(\omega,x_{1},x_{2},\ldots,x_{l})\in M^{F,i}_{l}\},$$

$H_{l}\in \mathcal{F}\times \mathfrak{B}^{l}$ can be known from the structure of $H_{l}$ by Theorem III.30 in \cite{CV}, where $\mathfrak{B}^{l}$ is the product $\sigma-$ algebra on the product space $X^{l}$. Note that
$$
d_{l}((x_{1},x_{2},\ldots,x_{l}),(y_{1},y_{2},\ldots,y_{l}))=\sum^{l}_{m=1}d(x_{m},y_{m})
$$
is the distance function on $X^{l}$, and if $\mathcal{E}_{\omega}^{l}$ denotes the product of $l$ copies of $\mathcal{E}_{\omega}$, then
$$d_{l}((x_{1},x_{2},\ldots,x_{l}), \mathcal{E}_{\omega}^{l})=\sum^{l}_{m=1}d(x_{m},\mathcal{E}_{\omega}),$$
is measurable in $\omega$ for each $(x_{1},x_{2},\ldots,x_{l})\in X^{l}$. Next, I will define $l(l-1)/2$ measurable functions $\psi_{mn}, 1\leq m<n\leq l$, on $H_{l}$ by $$\psi_{mn}(\omega,x_{1},x_{2},\ldots,x_{l})=d^{\omega}_{F}(x_{m},x_{n})(\varepsilon^{-1}).$$
Then
$$
M^{F}_{l,i}=\bigcap_{1\leq m<n\leq l} \psi^{-1}_{mn}[1+i^{-1},\infty)\in\mathcal{F}\times \mathfrak{B}^{l}.
$$
It is easy to see that each $M^{F,i}_{l}(\omega)$ is a closed subset of $\mathcal{E}_{\omega}^{l}$, by continuity of the RDS $\mathbf{F}$, and so it is compact. Clearly, $M^{F,i}_{l}\uparrow M^{F}_{l}$ and $M^{F,i}_{l}(\omega)\uparrow M^{F}_{l}(\omega)$ as $i\uparrow \infty$, in particular, $M^{F}_{l}\in \mathcal{F}\times \mathfrak{B}^{l}$.

Set $t_{F,i}(\omega,\varepsilon)=max\{l:M^{F,i}_{l}(\omega)\neq\emptyset\}$. By Lemma 4.1 in \cite{DZ}, it follows that
$$
\{\omega:t_{F,i}(\omega,\varepsilon)\geq l\}=\{\omega:M^{F,i}_{l}(\omega)\neq\emptyset\}=Pr_{\Omega}M^{F,i}_{l}\in \mathcal{F},
$$
where $Pr_{\Omega}$ is the projection of $\Omega\times X^{l}$ to $\Omega$, and so $t_{F,i}(\omega,\varepsilon)$ is measurable in $\omega$. In addition, $Sep(\omega ,F,\varepsilon,\mathbf{F})$ is also measurable, this since

\begin{equation*}
\begin{split}
\{\omega:Sep(\omega ,F,\varepsilon,\mathbf{F})\geq l\}=\{\omega:M^{F}_{l}(\omega)\neq\emptyset\}&=\bigcup_{j=1}^{\infty}\bigcap_{i=j}^{\infty}\{\omega:M^{F,i}_{l}(\omega)\neq\emptyset\}\\
&=\bigcup_{j=1}^{\infty}\bigcap_{i=j}^{\infty}\{\omega:t_{F,i}(\omega,\varepsilon)\geq l\}\in \mathcal{F}.
\end{split}
\end{equation*}

For any positive real number $\delta>0$, write
$$
N^{F}_{l,\delta}=\{(\omega,x_{1},x_{2},\ldots,x_{l})\in H_{l}:l\geq(1-\delta)Sep(\omega ,F,\varepsilon,\mathbf{F})\}
$$
and $N^{F}_{l,\delta}$ is an element of $\mathcal{F}\times \mathfrak{B}^{l}$. So I get the following results
$$
L^{F}_{l,\delta}=N^{F}_{l,\delta}\cap M^{F}_{l}\in\mathcal{F}\times \mathfrak{B}^{l}
$$
and
$$
L^{F,i}_{l,\delta}=N^{F}_{l,\delta}\cap M^{F,i}_{l}\in\mathcal{F}\times \mathfrak{B}^{l}.
$$

Now assuming the following sets
$$
L^{F}_{l,\delta}(\omega)=\{(x_{1},x_{2},\ldots,x_{l}):(\omega,x_{1},x_{2},\ldots,x_{l})\in L^{F}_{l,\delta}\}
$$
$$
L^{F,i}_{l,\delta}(\omega)=\{(x_{1},x_{2},\ldots,x_{l}):(\omega,x_{1},x_{2},\ldots,x_{l})\in L^{F,i}_{l,\delta}\}.
$$
I know that $L^{F,i}_{l,\delta}(\omega)$ are compact sets and $L^{F,i}_{l,\delta}(\omega)\uparrow L^{F}_{l,\delta}(\omega)$ as $i\uparrow\infty$. I then know that
$$
\tilde{\Omega}_{l,i}=\{\omega:t_{F,i}(\omega,\varepsilon)=Sep(\omega ,F,\varepsilon,\mathbf{F})=l\}\cap\{\omega:L^{F,i}_{l,\delta}\neq\emptyset\}
$$
are measurable, and the sets $\Omega_{l,i}=\tilde{\Omega}_{l,i}\setminus\tilde{\Omega}_{l,i-1}, i=1,2,\ldots$ with $\tilde{\Omega}_{l,0}=\emptyset$ are measurable, disjoint and $\bigcup_{l,i\geq1}\Omega_{l,i}=\Omega$.

I know from see Theorem III.30 in \cite{CV} that the mulfunction $\Phi_{l,i,\delta}$ defined by $\Phi_{l,i,\delta}(\omega)=L^{F,i}_{l,\delta}(\omega)$ for $\omega\in \Omega_{l,i}$ is measurable, and there is a measurable selection $\tau_{l,i,\delta}$ which is a measurable map $\tau_{l,i,\delta}:\Omega_{l,i}\rightarrow X^{l}$ such that $\tau_{l,i,\delta}(\omega)\in L^{F,i}_{l,\delta}(\omega)$ for $\omega\in \Omega_{l,i}$. Let $\pi_{l}:X^{l}\rightarrow P(X)$ be the multifunction defined by
$$\pi_{l}(x_{1},x_{2},\ldots,x_{l})=\{x_{1},x_{2},\ldots,x_{l}\}\subseteq X,$$
where $P(X)$ represents a family of sets consisting of all subsets of $X$. Then $\pi_{l}\circ \tau_{l,i,\delta}$ is a multifunction assigning to each $\omega\in \Omega_{l,i}$ a maximal $(\omega ,F,\varepsilon,\mathbf{F})$ separated set $L_{\omega}$ in $\mathcal{E}_{\omega}$ for which (3.1) holds true.

Let $U$ be any open subset of $X$, denote $V^{l}_{U}(m)=\{(x_{1},x_{2},\ldots,x_{l})\in X^{l}:x_{m}\in U\}$ and it is an open subset of $X^{l}$. Then
$$
\{\omega\in \Omega_{l,l}:\pi_{l}\circ \tau_{l,i,\delta}(\omega)\cap U\neq\emptyset\}=\bigcup^{l}_{j=1}\tau^{-1}_{l,i,\delta}V^{l}_{U}(j)\in \mathcal{F}.
$$

For each $\omega\in \Omega_{l,i}$, define $j_{l}(\omega)=i$ and
$$
\varphi_{\delta}(\omega)=\pi_{Sep(\omega ,F,\varepsilon,\mathbf{F})}\circ\tau_{Sep(\omega ,F,\varepsilon,\mathbf{F}),j_{l}(\omega),\delta}(\omega)
$$
then
$$
\{\omega:\varphi_{\delta}(\omega)\cap U\neq\emptyset\}=\bigcup_{l,i=1}^{\infty}(\Omega_{l,i}\cap\{\omega:\zeta_{l}\circ \tau_{l,i,\delta}(\omega)\cap U\neq\emptyset\})\in \mathcal{F}.
$$
Hence $\varphi_{\delta}$ is a measurable multifunction, for each $\omega\in\Omega$ a maximal $(\omega ,F,\varepsilon,\mathbf{F})$ separated set $L_{\omega}$ for which (2.1) holds true since Theorem III.30 in \cite{CV} and Lemma 2.4 follows this since $\delta>0$ is arbitrary.
\end{proof}

Using the above lemma, I present the following definition of topological entropy.

\begin{definition}
The topological entropy of random dynamical system $\mathbf{F}$ is given by
$$
h_{top}(\mathbf{F},\mathcal{E})=\lim_{\varepsilon\rightarrow0}h^{\varepsilon}_{top}(\mathbf{F},\mathcal{E}).
$$
where
$$h^{\varepsilon}_{top}(\mathbf{F},\mathcal{E})=\limsup_{n\rightarrow\infty}\frac{1}{|F_{n}|}\int_{\Omega}\log Sep(\omega,F_{n},\varepsilon,\mathbf{F})dP(\omega).$$
\end{definition}

It is easy to show the following proposition (see\cite{DZ,K,Ki1,KL17})

\begin{prop}
Let $\mathbf{F}$ be a continuous RDS on Polish
space $X$ with Borel $\sigma$-algebra $B_{X}$ over the Polish
system $(\Omega,\mathcal{F},\mathbb{P},G)$. Then
$$h_{top}(\mathbf{F},\mathcal{E})=h^{(r)}_{top}(\mathbf{F})$$
where $h^{(r)}_{top}(\mathbf{F})$ is the fiber topological entropy of $\mathcal{F}$.

\end{prop}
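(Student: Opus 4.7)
My plan is to prove the two inequalities $h_{top}(\mathbf{F},\mathcal{E})\le h^{(r)}_{top}(\mathbf{F})$ and $h_{top}(\mathbf{F},\mathcal{E})\ge h^{(r)}_{top}(\mathbf{F})$ by comparing, on each fiber $\mathcal{E}_{\omega}$, the separated-set counting function $Sep(\omega,F_{n},\varepsilon,\mathbf{F})$ with the minimal sub-cover cardinality $N(\mathcal{U}_{F_{n}}|\mathcal{E}_{\omega})$ of the refined cover $\mathcal{U}_{F_{n}}^{\omega}:=\bigvee_{s\in F_{n}}F_{s,\omega}^{-1}\mathcal{U}$, which is the quantity used in \cite{DZ} to build $h^{(r)}_{top}(\mathbf{F},\mathcal{U})$ and $h^{(r)}_{top}(\mathbf{F})=\sup_{\mathcal{U}}h^{(r)}_{top}(\mathbf{F},\mathcal{U})$. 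Once the two fiberwise inequalities are available, I would integrate them against $\mathbb{P}$, divide by $|F_{n}|$, take $\limsup_{n\to\infty}$, and finally pass to $\varepsilon\to 0$ on one side and to $\sup_{\mathcal{U}}$ on the other.

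For the direction $h_{top}(\mathbf{F},\mathcal{E})\le h^{(r)}_{top}(\mathbf{F})$, I would fix $\varepsilon>0$ and choose a finite open cover $\mathcal{U}$ of $X$ whose elements have diameter smaller than $\varepsilon$. If two distinct points $x,y\in\mathcal{E}_{\omega}$ belonged to the same atom of $\mathcal{U}_{F_{n}}^{\omega}$, then $d(F_{s,\omega}x,F_{s,\omega}y)<\varepsilon$ for every $s\in F_{n}$, contradicting $(\omega,F_{n},\varepsilon,\mathbf{F})$-separatedness; thus $Sep(\omega,F_{n},\varepsilon,\mathbf{F})\le N(\mathcal{U}_{F_{n}}|\mathcal{E}_{\omega})$. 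Integrating against $\mathbb{P}$, dividing by $|F_{n}|$, taking $\limsup$, and then $\varepsilon\to 0$ yields the inequality.

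For the opposite direction, I would fix a finite open cover $\mathcal{U}$ of $X$ whose Lebesgue number exceeds some $\delta>0$, and use Lemma 2.4 to extract, measurably in $\omega$, a maximal $(\omega,F_{n},\delta,\mathbf{F})$-separated set $L_{\omega}\subseteq\mathcal{E}_{\omega}$. Maximality forces $L_{\omega}$ to be $(\omega,F_{n},\delta)$-spanning: for every $y\in\mathcal{E}_{\omega}$ some $x\in L_{\omega}$ satisfies $d^{\omega}_{F_{n}}(x,y)\le\delta$, and then for each $s\in F_{n}$ the two-point set $\{F_{s,\omega}x,F_{s,\omega}y\}$ has diameter $\le\delta$, so lies inside a single element $U_{s}\in\mathcal{U}$. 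Hence $y\in\bigcap_{s\in F_{n}}F_{s,\omega}^{-1}U_{s}$, the atoms of $\mathcal{U}_{F_{n}}^{\omega}$ indexed by $L_{\omega}$ already cover $\mathcal{E}_{\omega}$, and $N(\mathcal{U}_{F_{n}}|\mathcal{E}_{\omega})\le\sharp L_{\omega}\le Sep(\omega,F_{n},\delta,\mathbf{F})$. Integrating and passing to the $\limsup$ gives $h^{(r)}_{top}(\mathbf{F},\mathcal{U})\le h^{\delta}_{top}(\mathbf{F},\mathcal{E})\le h_{top}(\mathbf{F},\mathcal{E})$, and taking the supremum over $\mathcal{U}$ completes the argument.

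The principal obstacle I anticipate is measurability in $\omega$: everything must be arranged so that both integrals in the definitions of $h^{\varepsilon}_{top}(\mathbf{F},\mathcal{E})$ and $h^{(r)}_{top}(\mathbf{F},\mathcal{U})$ are well-posed and that the pointwise inequalities can actually be integrated. The first half is exactly what Lemma 2.4 provides, while the analogous statement for $\omega\mapsto N(\mathcal{U}_{F_{n}}|\mathcal{E}_{\omega})$ is the parallel result from \cite{DZ}; together they allow termwise integration. A secondary technicality is ensuring a uniform-in-$\omega$ Lebesgue number in the lower bound, which I would handle by always starting from a finite open cover of the ambient compact space $X$ rather than of $\mathcal{E}$, an equivalent choice in the supremum defining $h^{(r)}_{top}(\mathbf{F})$ since each $\mathcal{E}_{\omega}$ is a compact subset of $X$.
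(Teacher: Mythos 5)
The paper does not actually prove this proposition: it is stated with the remark that it ``is easy to show'' and a pointer to \cite{DZ,K,Ki1,KL17}, so there is no in-paper argument to match yours against. Your proposal supplies the standard proof that those references contain --- the two-sided comparison, fiber by fiber, between $Sep(\omega,F_n,\varepsilon,\mathbf{F})$ and the minimal subcover count $N(\mathcal{U}_{F_n}|\mathcal{E}_\omega)$, followed by integration, normalization by $|F_n|$, and the limits in $\varepsilon$ and $\sup_{\mathcal{U}}$. The upper bound (each element of a subcover of diameter $<\varepsilon$ meets a $(\omega,F_n,\varepsilon,\mathbf{F})$-separated set in at most one point) is correct as written, and your handling of measurability --- Lemma 2.4 for the separated-set side, the corresponding measurable-selection results of \cite{DZ} for $\omega\mapsto N(\mathcal{U}_{F_n}|\mathcal{E}_\omega)$ --- is exactly what is needed to make the integrals legitimate. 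Working with covers of the ambient space $X$ rather than of the fibers, so that the Lebesgue number is uniform in $\omega$, is also the right move.

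One step needs tightening. In the lower bound you place the two-point set $\{F_{s,\omega}x,F_{s,\omega}y\}$ inside an element $U_s\in\mathcal{U}$; as written, $U_s$ depends on $y$ as well as on $x$, so the resulting elements of $\mathcal{U}_{F_n}^{\omega}$ are not ``indexed by $L_\omega$'' and you do not immediately get a subcover of cardinality $\sharp L_\omega$. The fix is standard: choose $\delta$ so small that every ball of radius $\delta$ in $X$ is contained in a member of $\mathcal{U}$ (equivalently, take $2\delta$ below the Lebesgue number), and for each $x\in L_\omega$ and $s\in F_n$ fix $U_s(x)\supseteq B(F_{s,\omega}x,\delta)$ once and for all; then every $y$ spanned by $x$ lies in $\bigcap_{s\in F_n}F_{s,\omega}^{-1}U_s(x)$, and these $\sharp L_\omega$ sets cover $\mathcal{E}_\omega$. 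With that adjustment the argument goes through.
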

\section{Main result}

Using the approach of \cite{K} and Lemma 2.4, similar to Proposition 3.7 in \cite{HL}, I have the following result.
\begin{prop}(Variational principle). Let $\mathbf{F}$ be a continuous RDS on Polish
space $X$ with Borel $\sigma$-algebra $B_{X}$ over the Polish
system $(\Omega,\mathcal{F},\mathbb{P},G)$. Then
$$h_{top} (\mathbf{F},\mathcal{E})=sup\{h^{(r)}_{\mu}(\mathbf{F}):\mu\in \mathcal{P}_{\mathbb{P}}(\mathcal{E},G) \}.$$
If in addition $(\Omega,\mathcal{F},\mathbb{P},G)$ is ergodic,
then
$$h_{top} (\mathbf{F},\mathcal{E})=sup\{h^{(r)}_{\mu}(\mathbf{F}):\mu\in \mathcal{E}_{\mathbb{P}}(\mathcal{E},G) \}.$$
\end{prop}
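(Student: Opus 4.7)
The plan is to establish the two inequalities $h_{top}(\mathbf{F},\mathcal{E})\le\sup_\mu h^{(r)}_\mu(\mathbf{F})$ and $\sup_\mu h^{(r)}_\mu(\mathbf{F})\le h_{top}(\mathbf{F},\mathcal{E})$ with $\mu$ ranging over $\mathcal{P}_{\mathbb{P}}(\mathcal{E},G)$, and then to deduce the ergodic statement from the affinity of both entropy functionals together with the ergodic decomposition for amenable $G$-actions.

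For the upper bound $\sup_\mu h^{(r)}_\mu(\mathbf{F})\le h_{top}(\mathbf{F},\mathcal{E})$, I would fix $\mu\in\mathcal{P}_{\mathbb{P}}(\mathcal{E},G)$ and a finite measurable partition $\alpha$ of $\mathcal{E}$ whose fiber pieces $\alpha_\omega$ have diameter less than $\varepsilon/3$ in the metric $d$. Disintegrating $\mu=\int_\Omega\mu_\omega\, d\mathbb{P}(\omega)$, a counting argument together with the Shannon--McMillan--Breiman theorem for amenable $G$-actions shows that for large $n$ the number of $\mu_\omega$-significant atoms of $\alpha^\omega_{F_n}$ on $\mathcal{E}_\omega$ is essentially $\exp(|F_n|h^{(r)}_\mu(\mathbf{F},\alpha))$. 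Picking one representative from each such atom produces a pairwise $(\omega,F_n,\varepsilon,\mathbf{F})$-separated family, so this count is bounded above by $Sep(\omega,F_n,\varepsilon,\mathbf{F})$. Integrating over $\Omega$, normalizing by $|F_n|$, then sending $n\to\infty$, $\varepsilon\to 0$, and taking supremum over $\alpha$ yields the bound.

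For the lower bound $h_{top}(\mathbf{F},\mathcal{E})\le\sup_\mu h^{(r)}_\mu(\mathbf{F})$, I would carry out a Misiurewicz-type construction in the random amenable setting. Fix $\varepsilon>0$ and invoke Lemma 2.4 to obtain, for each $n$, a measurable selection $\omega\mapsto L^n_\omega\subseteq\mathcal{E}_\omega$ of maximal $(\omega,F_n,\varepsilon,\mathbf{F})$-separated sets. Define the fiber empirical measures
$$\sigma^\omega_n=\frac{1}{\sharp(L^n_\omega)}\sum_{x\in L^n_\omega}\delta_x,$$
assemble them into the measure $\rho_n$ on $\Omega\times X$ with marginal $\mathbb{P}$ and fiber conditionals $\sigma^\omega_n$, and average along the F\o{}lner set $F_n$ to obtain
$$\nu_n=\frac{1}{|F_n|}\sum_{g\in F_n}\Theta_g^*\rho_n\in\mathcal{P}_{\mathbb{P}}(\mathcal{E}).$$
Extract a weak-$*$ subsequential limit $\mu$; the F\o{}lner property forces $\mu\in\mathcal{P}_{\mathbb{P}}(\mathcal{E},G)$. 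Choosing a finite partition $\alpha$ whose boundary is $\mu$-null and whose fiber pieces have diameter at most $\varepsilon$, a conditional entropy computation together with concavity of Shannon entropy yields
$$h^{(r)}_\mu(\mathbf{F},\alpha)\ge\limsup_{n\to\infty}\frac{1}{|F_n|}\int_\Omega\log\sharp(L^n_\omega)\, d\mathbb{P}(\omega).$$
Combining this with the estimate $\sharp(L^n_\omega)\ge(1-\delta)Sep(\omega,F_n,\varepsilon,\mathbf{F})$ supplied by Lemma 2.4 shows the right side is at least $h^\varepsilon_{top}(\mathbf{F},\mathcal{E})$, and letting $\varepsilon\to 0$ completes the argument. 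The ergodic case then follows since $\mu\mapsto h^{(r)}_\mu(\mathbf{F})$ is affine on $\mathcal{P}_{\mathbb{P}}(\mathcal{E},G)$, so integration against the ergodic decomposition transfers the supremum to $\mathcal{E}_{\mathbb{P}}(\mathcal{E},G)$.

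The main obstacle will be the lower bound. Two technical subtleties require care: first, the measurability in $\omega$ of the fiber empirical measures $\sigma^\omega_n$, which is exactly what Lemma 2.4 is designed to deliver; and second, the justification that F\o{}lner averaging produces an invariant weak-$*$ limit whose measure-theoretic entropy dominates the empirical growth rate. The latter typically rests on the Ornstein--Weiss quasi-tiling apparatus for amenable groups, combined with upper semi-continuity of conditional entropy with respect to refinement of the partition and convergence of the measure, to pass from raw sums indexed by $F_n$ to the intrinsic fiber entropy of the limiting invariant measure.
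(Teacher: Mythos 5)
Your Step 2 (the inequality $h_{top}(\mathbf{F},\mathcal{E})\le\sup_\mu h^{(r)}_\mu(\mathbf{F})$) follows the paper's argument essentially verbatim: measurable maximal separated sets from Lemma 2.4, fiber empirical measures, F\o lner averaging of the push-forwards, extraction of a weak-$*$ limit in $\mathcal{P}_{\mathbb{P}}(\mathcal{E},G)$, and a small-diameter partition with negligible boundary, giving $H_{\nu^{(n)}_\omega}\bigl(\bigvee_{g\in F_n}F_{g^{-1},g\omega}C(g\omega)\bigr)\ge\log Sep(\omega,F_n,\varepsilon,\mathbf{F})-1$ because each atom meets at most one point of the separated set. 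That half is sound and matches the paper.

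The gap is in your upper bound. You claim that if $\alpha$ has fiber atoms of diameter less than $\varepsilon/3$, then choosing one representative from each $\mu_\omega$-significant atom of $\alpha^\omega_{F_n}$ yields an $(\omega,F_n,\varepsilon,\mathbf{F})$-separated family. This is false: two distinct atoms of a measurable partition can share boundary, so their representatives can be arbitrarily close in $d^\omega_{F_n}$. Small diameter gives the opposite implication (points in the \emph{same} atom are close), which is what one needs for the lower bound, not the upper one. This is precisely the classical difficulty of comparing partition atoms with separated sets that the paper's introduction alludes to, and the paper resolves it in the standard Misiurewicz--Kifer way: replace $\alpha=\{A_1,\dots,A_l\}$ by $C=\{C_0,C_1,\dots,C_l\}$, where the $C_m\subseteq A_m$ are disjoint compact sets with $\mu(A_m\setminus C_m)<\varepsilon$, control $\int H_{\mu_\omega}(\alpha(\omega)\mid C(\omega))\,d\mathbb{P}(\omega)<1$ so that $h^{(r)}_\mu(\mathbf{F},(\Omega\times\alpha)_{\mathcal{E}})$ is dominated by $h^{(r)}_\mu(\mathbf{F},(\Omega\times C)_{\mathcal{E}})$ up to a harmless constant, and then use that the compact pieces are at positive mutual distance, so that for $\delta$ below that distance each nonempty atom of $C_{F_n}(\omega)$ does contribute a point to an $(\omega,F_n,\delta,\mathbf{F})$-separated set, giving $H_{\mu_\omega}(C_{F_n}(\omega))\le\log\sharp(C_{F_n}(\omega))\le\log Sep(\omega,F_n,\delta,\mathbf{F})$. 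Note also that the paper never invokes the Shannon--McMillan--Breiman theorem (which in the amenable random setting would itself require a tempered F\o lner sequence and additional justification); the elementary bound of entropy by the logarithm of the number of atoms suffices once the compact-set partition is in place. Your ergodic-decomposition remark for the second identity is consistent with what the paper leaves implicit.
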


\begin{proof}
I will prove it in the following two steps.

\medskip\noindent{\bf{Step 1.}}\quad {\it {I will show that $h^{(r)}_{\mu}(\mathbf{F})\leq h_{top} (\mathbf{F},\mathcal{E})$ for all $\mu\in \mathcal{P}_{\mathbb{P}}(\mathcal{E},G)$.}}

By the proof of Theorem 4.6 of \cite{DZ}, I only need to prove
\begin{equation}
 h^{(r)}_{\mu}(\mathbf{F},(\Omega\times \alpha)_{\mathcal{E}})\leq h_{top}(\mathbf{F},\mathcal{E}), \,\forall\, \alpha\in P(X),  and \ \mu\in \mathcal{P}_{\mathbb{P}}(\mathcal{E},G).
\end{equation}

Let $\mu\in \mathcal{P}_{\mathbb{P}}(\mathcal{E},G)$, any finite measurable partition $\alpha=\{A_{1},A_{2},\ldots,A_{l}\}$ of $X$ and take any positive real number $\varepsilon$ with $\varepsilon l\log l<1$. Write $$\alpha(\omega)=\{A_{1}(\omega),A_{2}(\omega),\ldots,A_{l}(\omega)\},$$ and $$A_{m}(\omega)=A_{m}\cap \mathcal{E}_{\omega},m=1,2,\ldots,l,$$
then $\alpha(\omega)$ is the corresponding partition of $\mathcal{E}_{\omega}$. From the regularity of the measure $\mu$, I can find a family of compact sets that satisfies the following conditions
$$C_{m}\subseteq A_{m},m=1,2,\ldots,l,$$ and
\begin{equation}
 \mu(A_{m}\backslash C_{m})=\int\mu_{\omega}(A_{m}(\omega)\backslash C_{m}(\omega))d\mathbb{P}(\omega)<\varepsilon.
\end{equation}
where $C_{m}(\omega)=C_{m}\cap \mathcal{E}_{\omega}$. Set $C_{0}(\omega)=\mathcal{E}_{\omega}\backslash \bigcup^{l}_{m=1}C_{i}(\omega)$, then
\begin{equation*}
\begin{split}
\int\mu_{\omega}(C_{0}(\omega))d\mathbb{p}(\omega)&=\int\mu_{\omega}(\mathcal{E}_{\omega}\backslash \bigcup^{l}_{m=1}C_{i}(\omega))d\mathbb{P}(\omega)\\
&=\int\mu_{\omega}\big(\bigcup^{l}_{m=1}A_{m}(\omega)\backslash \bigcup^{l}_{m=1}C_{i}(\omega)\big)d\mathbb{P}(\omega)\\
&\leq\int\sum^{l}_{i=1}\mu_{\omega}(A_{m}(\omega)\backslash C_{m}(\omega))d\mathbb{P}(\omega)\\
&<k\varepsilon.
\end{split}
\end{equation*}

From the above I can see that $C(\omega)=\{C_{0}(\omega),C_{1}(\omega),C_{2}(\omega),\ldots,C_{l}(\omega)\}$ is a partition of $\mathcal{E}_{\omega}$, and the partition satisfies the inequality
\begin{equation*}
\begin{split}
H_{\mu_{\omega}}(A(\omega)|C(\omega)))&=-\sum^{l}_{m=0}\sum^{l}_{n=1}\mu_{\omega}(C_{m}(\omega))\frac{\mu_{\omega}(C_{m}(\omega)\cap A_{n}(\omega))}{\mu_{\omega}(C_{m}(\omega))}\log\frac{\mu_{\omega}(C_{m}(\omega)\cap A_{n}(\omega))}{\mu_{\omega}(C_{m}(\omega))}\\
&=-\mu_{\omega}(C_{0}(\omega))\sum^{l}_{n=1}\frac{\mu_{\omega}(C_{0}(\omega)\cap A_{n}(\omega))}{\mu_{\omega}(C_{0}(\omega))}\log\frac{\mu_{\omega}(C_{0}(\omega)\cap A_{n}(\omega))}{\mu_{\omega}(C_{0}(\omega))}\\
&\leq\mu_{\omega}(C_{0}(\omega))\log l.
\end{split}
\end{equation*}
hence
$$
\int H_{\mu_{\omega}}(A(\omega)|C(\omega))d\mathbb{P}(\omega)\leq\int \mu_{\omega}(C_{0}(\omega))\log ld\mathbb{P}(\omega)\leq l\varepsilon \log l<1.
$$

By Proposition 4.5 in \cite{DZ}, that ia, a propertie of conditional entropy,
\begin{equation}
h^{(r)}_{\mu}(\mathbf{F},(\Omega\times \alpha)_{\mathcal{E}})\leq h^{(r)}_{\mu}(\mathbf{F},(\Omega\times C)_{\mathcal{E}}).
\end{equation}
where I denote $$(\Omega\times \alpha)_{\mathcal{E}}=\{(\Omega\times A_{m})\cap \mathcal{E}:A_{m}\in \alpha\},$$
$$(\Omega\times C)_{\mathcal{E}}=\{(\Omega\times C_{m})\cap \mathcal{E}:C_{m}\in C\}.$$

Set $C_{F}(\omega)=\bigvee_{g\in F}F^{-1}_{g,\omega}C(g\omega)$. Then I can get
$$H_{\mu_{\omega}}(C_{F}(\omega))=\sum_{B\in C_{F}(\omega)}\mu_{\omega}(B)(-\log \mu_{\omega}(B))\leq \log \sharp(C_{F}(\omega)).$$
by the standard inequalitty
$$\sum_{1\leq m\leq k}p_{m}(a_{m}-\log p_{m})\leq \log \sum_{1\leq m\leq k} e^{a_{m}}.$$
for any probability vector $(p_{1},p_{2},\ldots,p_{k})$.

Pick small positive real number $\delta>0$ and let $E$ be a maximal $(\omega ,F_{n},\delta,\mathbf{F})$ separated set of $\mathcal{E}_{\omega}$. Then for each $x\in \mathcal{E}_{\omega}$ there is an $y(x)\in E$ such that $d^{\omega}_{F_{n}}(x,y)<\delta$ by the proposition of maximal $(\omega ,F_{n},\delta,\mathbf{F})$ separated set $E$. I may assume that $\delta<min\{diam(A):A\in C_{F_{n}}(\omega)\}$ if necessary. It follows that
$$
H_{\mu_{\omega}}(C_{F_{n}}(\omega))\leq \log \sharp (C_{F_{n}}(\omega))\leq \log Sep(\omega ,F_{n},\delta,\mathbf{F}).
$$

Integrating this against $\mathbb{P}$, dividing by $|F_{n}|$, I derive
$$
\frac{1}{|F_{n}|}\int H_{\mu_{\omega}}(C_{F_{n}}(\omega))d\mathbb{P}(\omega)\leq\frac{1}{|F_{n}|}\int \log Sep(\omega,F_{n},\delta,\mathbf{F})d\mathbb{P}(\omega).
$$
and letting $n\rightarrow\infty$, I have
$$
h^{(r)}_{\mu}(G,(\Omega\times C)_{\mathcal{E}})\leq h^{\delta}_{top}(\mathbf{F},\mathcal{E}).
$$

Using (3.3), the inequality
$$h^{(r)}_{\mu}(G,(\Omega\times \alpha)_{\mathcal{E}})\leq h^{(r)}_{\mu}(G,(\Omega\times C)_{\mathcal{E}})\leq h^{\delta}_{top}(\mathbf{F},\mathcal{E}).$$
Since $\alpha$ and $\delta$ are arbitrary, it follows that
$$h^{(r)}_{\mu}(\mathbf{F})\leq h_{top}(\mathbf{F},\mathcal{E}).$$

\medskip\noindent{\bf{Step 2.}}\quad {\it {I will show that $h_{top} (\mathbf{F},\mathcal{E})\leq sup\{h_{\mu}(\mathbf{F}):\mu\in \mathcal{P}_{\mathbb{P}}(\mathcal{E},G) \}$.}}

Let $\{F_{n}\}$ be a F{\o}lner sequence for $G$, $\varepsilon>0$ is a small constant and employ Lemma 1.2 to choose a family of maximal $(\omega ,F_{n},\delta,\mathbf{F})$ separated sets that are measurable in $\omega$ such that
\begin{equation}
\sharp L(\omega ,F_{n},\delta,\mathbf{F})\geq\frac{1}{e}Sep (\omega,F_{n},\delta,\mathbf{F})
\end{equation}

For the sake of further proof, I need to define a probability measures $\nu^{(n)}$ on $\mathcal{E}$ in the following way
$$
\nu^{(n)}_{\omega}=\frac{\sum_{x\in L(\omega ,F_{n},\varepsilon,\mathbf{F})}\delta_{x}}{\sharp L(\omega ,F_{n},\varepsilon,\mathbf{F})}.
$$
So that $d\nu^{(n)}(\omega,x)=d\nu^{(n)}_{\omega}(x)d\mathbb{P}(\omega)$, and set
$$
\mu^{(n)}=\frac{1}{|F_{n}|}\sum_{g\in F_{n}}g\nu^{n}.
$$

By the definition of $h^{\varepsilon}_{top}(\mathbf{F},\mathcal{E})$ and by Proposition 4.3 in \cite{DZ}, I can choose a subsequence $\{n_{j}\}$ such that

\begin{equation}
lim_{j\rightarrow\infty}\frac{1}{|F_{n_{j}}|}\int log Sep(\omega ,F_{n},\varepsilon,\mathbf{F})dp(\omega)=h^{\varepsilon}_{top}(\mathbf{F},\mathcal{E})
\text{ and } \mu^{n_{j}}\Rightarrow \mu \ as \ j\rightarrow\infty,
\end{equation}
for some $\mu\in\mathcal{P}_{P}(\mathcal{E},G)$.

Further, I choose a partition $C=\{C_{1},\ldots,C_{k}\}$ of $X$ that satisfies $diam(C)\leq\varepsilon$ and $(\bigvee_{g\in F_{n}})g^{-1}C{(\omega)}$ is a clopen partition of $\mathcal{E}_{\omega}$ for $\mathbb{P}-$a.e. $\omega\in \Omega$. Set
$$
C(\omega)=\{C_{1}(\omega),\ldots,C_{k}(\omega)\}.
$$
I have by (3.4),
\begin{equation}
H_{\nu^{(n)}_{\omega}}(\bigvee_{g\in F_{n}}F_{g^{-1},g\omega}C(g\omega))=log\sharp L(\omega,F_{n},\varepsilon,\mathbf{F})\geq log Sep(\omega,F_{n},\varepsilon,\mathbf{F})-1,
\end{equation}
this since each element of $\bigvee_{g\in F_{n}}F_{g^{-1},g\omega}C(g\omega)$ contain at most one element of $L(\omega,F_{n},\varepsilon,\mathbf{F})$.

Let $P=\{P_{1},\ldots,P_{k}\},P=(\Omega\times C_{i})\cap \mathcal{E}$, then $P$ is a partition of $\mathcal{E}$ and
$$P_{i}(\omega)=\{x\in \mathcal{E}_{\omega}:(\omega,x)\in P_{i}\}=C_{i}(\omega).$$ Integrating in (3.6) against $\mathbb{P}$, I obtain by $(4.2)$ in \cite{DZ}, the inequality
\begin{equation}
H_{\nu^{(n)}}(\bigvee_{g\in F_{n}}\Theta_{g}^{-1}P|\mathcal{F}_{\mathcal{E}})\geq log Sep(\omega,F_{n},\varepsilon,\mathbf{F})-1
\end{equation}

Dividing by $|F_{n}|$ and then take the limit along a subsequence $n_{j}\rightarrow\infty$ satisfying (3.5) both sides of this inequality. Taking into account Proposition 4.3 in \cite{DZ}, in view of the choice of the partition $\alpha$, it follows that
$$
h^{\varepsilon}_{top}(\mathbf{F},\mathcal{E})\leq h^{(r)}_{\mu}(\mathbf{F})
$$
and letting $\varepsilon\rightarrow0$, the required inequality follows, completing the proof of Step 2.
\end{proof}



\bibliographystyle{amsplain}

\begin{thebibliography}{10}

\bibitem {AKM}R.L.Adler,A.G.Konheimand,M.H.McAndrew,
\textit{Topological entropy}
Trans.Amer.Math.Soc.,114(1965),309-319.

\bibitem {CV}C.Castaing and M.Valadier,
\textit{Convex analysis and measurable multifunctions.}
Lecture Notes Math.,vol. 580,Springer-Verlag, New York-Berlin,1977.

\bibitem {DGS}M. Denker, C. Grillenberger and K. Sigmund.
\textit{Ergodic Theory on Compact Spaces (Lecture Notes in Mathematics, 527).}
Springer, Berlin, 1976.

\bibitem {DZ} A.Dooley , G. Zhang,
\textit{Local entropy theory of a random dynamical system.}
American Mathematical Society, 2015.

\bibitem {G}T.N.T.Goodman,
\textit{Relating topological entropy and measure entropy.}
Bull.LondonMath.Soc.,3(1971),176-180.

\bibitem {G2}L.W.Goodwyn,
\textit{Topological entropy bounds measure-theoretic entropy.}
Proc.Amer.Math.Soc.,23(1969),679-688.

\bibitem {HL} W. Huang ,K. Lu,
\textit{Entropy, chaos, and weak horseshoe for infinite
dimensional random dynamical systems.} Communications on Pure
and Applied Mathematics, 2017, 70(10): 1987-2036.

\bibitem {HLZ} X.Huang, Y.Lian, C.Zhu,
\textit{A Billingsley-type theorem for the pressure of an action of an amenable group.}
Discrete Contin. Dyn. Syst., 2019, 39(2):959-993.


\bibitem {K} Y.Kifer,
\textit{On the topological pressure for random bundle transformations.}
Topology, ergodic theory, real algebraic geometry, 197-214. American Mathematical Society Translations, Series
2, 202. American Mathematical Society, Providence, R.I., 2001.


\bibitem {K1}A. N. Kolmogorov,
\textit{A new metric invariant of transient dynamical systems and automorphisms in Lebesgue spaces.}
 Dokl. Akad. Sci. SSSR, 119 (1958), 861-864.

\bibitem {Ki1}Y.Kifer,
\textit{Ergodic theory of random transformations.}
Birkh\"{a}user,Boston,1986.

\bibitem {KL17}  D. Kerr,  H. Li,
\textit{Ergodic Theory : Independence and Dichotomies.} Springer,
2016.


\bibitem {P} P. Walters,
\textit{An introduction to ergodic theory.} Graduate Texts in
Mathematics, 79. Springer, New York-Berlin, 1982.

\bibitem {W} B. Weiss,
\textit{Actions of amenable groups, Topics in dynamics and ergodic
theory.} London Math. Soc. Lecture Note Ser., vol. 310, Cambridge
Univ. Press, Cambridge, 2003, pp. 226-262.






\end{thebibliography}

\end{document}